\newtheorem{theorem}{Theorem}[]
\newtheorem{definition}[theorem]{Definition}
\newtheorem{example}{Example}[section]
\newtheorem{lemma}[theorem]{Lemma}
\newtheorem{proposition}[theorem]{Proposition}
\newtheorem{corollary}[theorem]{Corollary}
\title{Abhyankar-Moh Semigroups for arbitrary hypersurfaces} %%%%%%%%%%%%
\author[1]{Fuensanta Aroca}
\author[1]{Annel Ayala}
\author[2]{Oscar Castañon}
\author[3]{Giovanna Ilardi}
\affil[1]{\small Instituto de Matemáticas, Unidad Oaxaca, Universidad Nacional Autónoma de México, Oaxaca, Oaxaca, México, \texttt{fuen@im.unam.mx}, \texttt{annelayv@gmail.com}}
\affil[2]{\small Escuela Superior de Matemáticas, Número 3, Universidad Autónoma de Guerrero, Iguala, Guerrero, México, \texttt{oscar.moreno@im.unam.mx}}
\affil[3]{\small Dipartimento Matematica Ed Applicazioni ``R. Caccioppoli''
Universit\`{a} Degli Studi Di Napoli ``Federico II'' Via Cintia -
Complesso Universitario Di Monte S. Angelo 80126 - Napoli - Italia,
\texttt{giovanna.ilardi@unina.it}}
\date{\today}
\begin{document}

\maketitle

\begin{abstract}
For an arbitrary hypersurface singularity, we construct a family of semigroups associated with algebraically closed fields that arise as an infinite union of rings of series. These semigroups extend the value semigroup of a plane curve studied by S. S. Abhyankar and T. T. Moh \cite{ AbhyankarSing:1977, Abhyankar:1977, AbhyankarMoh:1973}. The algebraically closed fields under consideration possess a natural valuation that induces a corresponding value semigroup. We establish the necessary conditions under which these semigroups are independent of the choice of the root. Moreover, the extensions proposed by P. D. González, K.-H. Kiyek and M. Micus  \cite{Gonzalez:2003, KiyekMicus:1988}, where they specifically address the case of quasi-ordinary singularities, and the extensions introduced by A. Sathaye \cite{Sathaye:1983} and by A. Ali and A. Assi  \cite{AbbasAssi:2021}, can be understood as particular instances within our constructed family.

 MSC [2000]: {05E40; 20M14; 13F25}.
 
 Keywords: Semigroup; Formal power series; Newton-Puiseux expansions

\end{abstract}

\maketitle

\let\thefootnote\relax

\section{Introduction}

Let $K$ be an algebraically closed field of characteristic zero. Let $K[[x]]$ denote the ring of formal power series in $x$ over $K$ and let $K((x))$ be its field of fractions.

The Newton-Puiseux theorem \cite{Puiseux:1850} asserts that,  
if $f\in K[[x]][y]$ is a monic irreducible polynomial  of degree $d$, then  it factors in $K[[x^\frac{1}{d}]][y]$ as
\begin{equation}\label{factorizacion curva}
   f= \prod_{\eta^d=1} (y-\xi (\eta x^\frac{1}{d}))
\end{equation}
 where $\xi \in K[[x^\frac{1}{d}]]$.

 Given a series $\xi=\displaystyle\sum_{i=\alpha}^\infty a_i x^\frac{i}{d}\in K((x^\frac{1}{d}))$ we denote
\begin{equation}\label{orden de serie}
 ord_x (\xi) := \min_{a_i\neq 0} \frac{i}{d}.
\end{equation}

 In 1973, S. S. Abhyankar and T. T. Moh \cite{AbhyankarMoh:1973,Abhyankar:1977,AbhyankarSing:1977}
 studied the structure of the semigroup
\begin{equation}\label{Semigrupo Curva Plana}
   \Gamma:= \{ ord_x h(\xi) \ ; \ h\in K[[x]][y]\setminus (f)\}.
\end{equation}

In 1948, C. Arf \cite{Arf} had already introduced a semigroup similar to the one in (\ref{Semigrupo Curva Plana}). P. Du Val \cite{Duval} discussed Arf's semigroup results, providing an alternative interpretation. A more recent discussion on the study of semigroups on hypersurfaces can be found in \cite{Patrick2004}.

As a consequence of (\ref{factorizacion curva}), the semigroup $\Gamma$ does not depend on the chosen root of $f$ and it makes sense to say that $\Gamma$ is the {\bf value semigroup of the plane curve defined by $f$}. 

It is worth noting that the semigroup of a plane branch is classically defined in terms of intersection multiplicities. The semigroup obtained in this way is contained in $\mathbb{Z}$ and equals $\Gamma$ up to rescaling.

The value semigroup is a useful tool to study and classify plane curve singularities (see for example \cite{Zariski:1986}). In particular, it determines the topological type of the singularity. Moreover, the structure of this semigroup is a useful tool in coding theory (see for example \cite{CampilloFarran:2000, GalindoMonserrat:2009}).

To extend the concept of ``value semigroup'' to hypersurfaces, the following are needed:
\begin{itemize}
    \item A suitable field $\mathcal{S}$ containing $K[[x_1,\cdots ,x_n]]$ such that $f\in K[[x_1,\cdots ,x_n]][y]$ splits as an element of  $\mathcal{S}[y]$  (so that we can choose $\xi$).
    \item  A mapping $\nu:\mathcal{S} \longrightarrow \mathbb{Q}^n $ analogous to the mapping $ord_x: K((x^{\frac{1}{d}}))\longrightarrow \mathbb{Q}$.
    \item A subring $\mathcal{A}\subset \mathcal{S}$  to consider the values of $h(\xi)$ with $h\in\mathcal{A}[y]$.
\end{itemize}

 In 1983, Abhyankar's student, A. Sathaye \cite{Sathaye:1983}, gave a generalization of Abhyankar-Moh results when $x$ is replaced by an $n$-tuple $(x_1,\ldots ,x_n)$. Sathaye's definition uses the field of iterated Puiseux series as $\mathcal{S}$, the minimum of the support with the rev-lex order as $\nu$, and the ring $K[[x_1,\ldots ,x_n]]$ as $\mathcal{A}$.  P. D. González, K.-H. Kiyek and M. Micus gave, independently, an extension for quasi-ordinary singularities \cite{Gonzalez:2003, KiyekMicus:1988}. González's construction has been extended to $\sigma$-free singularities by A. Ali and A. Assi  \cite{AbbasAssi:2021} using the ideas of J. M. Tornero,  presented in \cite{Tornero:2003,Tornero:2008}.

 In this paper we construct, for an arbitrary hypersurface singularity, a family of semigroups, defined in terms of the family of algebraically closed fields constructed in \cite{ArocaIlardi:2009}. These  algebraically closed fields have a natural valuation that induces a value semigroup. We give the necessary conditions so that these semigroups do not depend on the chosen root $\xi$.

The constructions introduced by A. Sathaye, by P. D. González, and jointly by A. Ali and A. Assi naturally arise as specific examples within our defined family.

  Some of the results that we present in this article are also presented in  \cite{Aroca:Ayala}.

\section{A family of algebraically closed fields}

As we pointed out in the introduction, to extend the concept of the Abhyankar-Moh semigroup from plane curves to hypersurfaces $V(f)$, we need to be able to produce a root of $f\in K[[x_1,\ldots ,x_n]][y]$. 

Given $f\in K[[x_1,\ldots, x_n]][y]$, let $\Delta_y (f)$ denote the discriminant of $f$ with respect to $y$. The hypersurface singularity $V(f)$ is said to be \emph{quasi-ordinary} when
$$\Delta_y (f)= X^\gamma u,$$
with $\gamma\in (\mathbb{Z}_{\geq 0})^n$ and $u$ a unit in $K[[{x_1}, \ldots , {x_n}]]$.

González's extension uses the Abhyankar-Jung Theorem \cite{Abhyankar:1958}, which guarantees that, for quasi-ordinary singularities, the roots belong to $ K[[{x_1}^\frac{1}{d},\ldots ,{x_n}^\frac{1}{d}]]$. To extend González's construction, A. Ali and A. Assi use a theorem due to J. McDonald \cite{McDonald:1995} that ensures the existence of a cone $\sigma$ such that $f$ splits in $K_\sigma [[{x_1}^\frac{1}{d},\ldots ,{x_n}^\frac{1}{d}]]$.  Sathaye's construction relies on the fact that the field of iterated Puiseux series is algebraically closed.

McDonald’s theorem was originally stated for polynomials in several variables in the ring $K[x_1,\ldots, x_n][y]$. It was generalized to polynomials in $K[[x_1, \ldots , x_n]][y]$, and in the complex analytic case this generalization was connected to the Newton polyhedron of the discriminant of $f$ with respect to $y$ in \cite{Gonzalez:2000}.

In this section we recall the construction of a family of algebraically closed fields presented in \cite{ArocaIlardi:2009,ArocaRond:2019}. For a detailed discussion of the fields $K_\sigma ((X))$ and $K_{\preceq}((X))$ we refer the reader to the beautiful article written by A. A. Monforte and M. Kauers  \cite{MonforteKauers:2013}.

Let $\preceq$ be an additive total order on $\mathbb{R}^n$ (compatible with the group structure). 
We denote by ${(\mathbb{R}_{\geq 0})}^n$ the first orthant of $\mathbb{R}^n$, that is,
${(\mathbb{R}_{\geq 0})}^n:=\{(x_1,\dots,x_n)\in\mathbb{R}^n \  ; \  x_i\geq 0,\ \text{for } i=1,\dots,n\}$.
We also write
$(\mathbb{R}^n)_{\succeq \underline{0}}:=\{u\in\mathbb{R}^n \ ; \  \underline{0}\preceq u\}$.

A subset  $\sigma \subset \mathbb{R}^n$ is a (convex polyhedral rational) \textbf{cone}  when $$\sigma=<u_1, u_2, \ldots, u_s>=\{\lambda_1u_1+\lambda_2u_2 +\ldots+\lambda_su_s\hspace{0.1cm} ; \hspace{0.1cm} \lambda_i\in \mathbb{R}_{\geq 0}\}$$ for some $u_1, u_2,\ldots , u_s \in \mathbb{Q}^n$.

If $\sigma\subset (\mathbb{R}^n)_{\succeq \underline{0}}$ then $\sigma$ does not contain any nontrivial linear subspace and the set of formal series

\begin{center}
$K_\sigma [[X]]:=\left\{\displaystyle\sum_{\gamma \in \sigma \cap \mathbb{Z}^n}a_\gamma X^{\gamma} \ ; \ a_\gamma \in K \right\}$
\end{center}
has a natural ring structure, where $X=(x_1,\dots,x_n)$. The ring $K_\sigma [[X]]$ is the completion of the coordinate ring of an affine toric variety with respect to a certain maximal ideal. We denote by $K_\sigma((X))$ its field of fractions.

A cone $\sigma$ and an order $\preceq$ are said to be \textbf{compatible} when $\sigma \subseteq (\mathbb{R}^n)_{\succeq \underline{0}}$. In what follows, the symbol $\preceq$ will stand for an additive  total order on $\mathbb{R}^n$ compatible with the first orthant. We will work with cones compatible with $\preceq$ containing the first orthant. That is ${(\mathbb{R}_{\geq 0})}^n\subset \sigma\subset (\mathbb{R}^n)_{\succeq \underline{0}}$. 

We will be using the following rings $K[[X]]\subset K_{\preceq}[[X]]\subset K_{\preceq}[[{X}^{\frac{1}{k}}]] \subset K_\preceq[[X^*]]$,

$$K_{\preceq}[[X]]:= \bigcup_{\sigma \subset (\mathbb{R}^n)_{\succeq \underline{0}}} K_\sigma [[X]],$$
$$\text{for}\, k\in \mathbb{Z}_{\geq 1},\quad K_{\preceq}[[{X}^{\frac{1}{k}}]] := K_{\preceq}[[{x_1}^\frac{1}{k}, \ldots , {x_n}^\frac{1}{k} ]],$$
$$K_\preceq[[X^*]] :=\bigcup_{k\in \mathbb{Z}_{\geq 1}}K_{\preceq}[[{X}^{\frac{1}{k}}]],$$ 
and their corresponding fields of fractions $K((X))\subset K_{\preceq}((X)) \subset K_{\preceq}((X^{\frac{1}{k}})) \subset K_{\preceq}((X^*))$, see, for example, \cite[p.~358]{MonforteKauers:2013},

\begin{equation}
    K_{\preceq}((X)):=\{\varphi \ ; \  \exists \ \gamma \in \mathbb{Z}^n,\  X^\gamma\varphi \in K_{\preceq}[[X]]\}= \bigcup_{\sigma \subset (\mathbb{R}^n)_{\succeq \underline{0}}} K_\sigma ((X)),
\end{equation}

\begin{equation}\label{union con ramificacion}
    K_{\preceq}((X^{\frac{1}{k}})):=\{\varphi \ ; \  \exists \ \gamma \in \mathbb{Z}^n, \  \exists \ k \in \mathbb{Z}_{>0},\  X^{ \frac{\gamma}{k}}\varphi \in K_{\preceq}[[{X}^{\frac{1}{k}}]]\} = \bigcup_{\sigma \subset (\mathbb{R}^n)_{\succeq \underline{0}}} K_\sigma ((X^{\frac{1}{k}})),
\end{equation}

\begin{equation}\label{algebraicamente cerrado}
    K_{\preceq}((X^*)):=\{\varphi \ ; \  \exists\  \gamma \in \mathbb{Z}^n,\   \exists\  k \in \mathbb{Z}_{>0}, \  X^{ \frac{\gamma}{k}}\varphi \in K_\preceq[[X^*]]\}=\bigcup_{k \in \mathbb{Z}_{\geq 1}} K_\preceq ((X^{\frac{1}{k}})).
\end{equation}

Let $R=K_\preceq[[X^*]], K_{\preceq}[[X]]$ or $ K_{\preceq}[[{X}^{\frac{1}{k}}]]$, an element $\displaystyle\sum_{\gamma \in \Lambda}a_\gamma X^{\gamma}\in R$  is a unit of $R$ if and only if $a_{(0,\ldots 0)}\neq 0$  (see, for example, Theorem 12 \cite{MonforteKauers:2013}).

Given a series $\varphi=\displaystyle\sum_{\gamma \in \Lambda}a_\gamma x^{\gamma}$, the support of $\varphi$ is the set $Supp(\varphi):= \{\gamma \in \Lambda \ ; \  a_\gamma\neq 0\}$.

When $K$ is an algebraically closed field of characteristic zero, the field $K_{\preceq}((X^*))$ is algebraically closed \cite[Theorem 1, Theorem 4.5]{ArocaIlardi:2009,ArocaRond:2019}.

Note that, in several variables, the ring of Puiseux power series (as defined, for example, in \cite{Tornero:2008}) is contained in $K_{\preceq}((X^*))$ if and only if  the first orthant is compatible with $\preceq$. Therefore, by the Abhyankar-Jung Theorem \cite{Abhyankar:1958}, the roots of a quasi-ordinary polynomial in $K_{\preceq}((X^*))$ will coincide for any order $\preceq$ with ${\left(\mathbb{R}_{\geq 0}\right)}^n\subset {\left(\mathbb{R}^n\right)}_{\succeq \underline{0}}$. The same applies for Puiseux hypersurfaces, i.e.   hypersurfaces that can be locally parametrized by Puiseux series.

Given a vector $\omega\in {\left(\mathbb{R}_{>0}\right)}^n$ of rationally independent coordinates, $\omega$ induces a total order  on $\mathbb{Q}^n$ compatible with the group structure given by

\begin{equation}\label{9811111}
\alpha \preceq_{\omega} \beta \hspace{0.2 cm}\text{if and only if} \hspace{0.2 cm}\omega\cdot \alpha \leq \omega \cdot \beta.
\end{equation}
The order $\preceq_{\omega}$ may be extended to a total order $\preceq$ on $\mathbb{R}^n$ \cite{Robbiano:1986}. The field $K_{\preceq}((X^*))$ is the same, independently of the extension and we may denote $K_{\preceq_{\omega}}((X^*)):=K_{\preceq}((X^*))$.

M. Buchacher \cite{Buchacher:2023} has implemented an algorithm using \textit{Mathematica} that computes the first terms of the roots in $K_{\preceq_\omega}((X^*))$ of polynomials $f\in K[X][y]$. We have used his implementation for the examples presented in this paper.

\begin{example}\label{essd}
Set 
$f(z)=z^2-2x_1^2x_2^4\,z+\left(x_1^4x_2^8-x_1x_2^3-2x_1^2x_2^4+6x_1^2x_2^5-x_1^3x_2^5+6x_1^3x_2^6-9x_1^3x_2^7\right) \in \mathbb{C}[[x_1, x_2]][z]$.

We have that $\Delta_z(f)= 4x_1x_2^3\left(1+x_1x_2-3x_1x_2^2\right)^2$, hence $f$ is quasi-ordinary, its roots in $\mathbb{C}[[x_1^{\frac{1}{2}}, x_2^{\frac{1}{2}}]]$ are

$$
\xi_1=x_1^{\frac{1}{2}}x_2^{\frac{3}{2}}+x_1^{\frac{3}{2}}x_2^{\frac{5}{2}}-3x_1^{\frac{3}{2}}x_2^{\frac{7}{2}}+x_1^2x_2^4,
\hspace{0.3cm}
\xi_2=-x_1^{\frac{1}{2}}x_2^{\frac{3}{2}}-x_1^{\frac{3}{2}}x_2^{\frac{5}{2}}+3x_1^{\frac{3}{2}}x_2^{\frac{7}{2}}+x_1^2x_2^4.
$$
Note that $\xi_1$ and $\xi_2$ belong to $\mathbb{C}_{\preceq}((X^*))$ for any order $\preceq$ compatible with the first orthant.
\end{example}

\begin{example}\label{23231s}
Set $f(y)=y^4 - 2(x_1+x_2)y^2 + x_1^2 - 2x_1x_2 + x_2^2 \in \mathbb{C}[[x_1,x_2]][y]$. The roots of $f$ are: $$y=\pm x_1^{\frac{1}{2}}\pm x_2^{\frac{1}{2}},$$ and they belong to $\mathbb{C}[[x_1^{\frac{1}{2}},x_2^{\frac{1}{2}}]]$, hence $f$ defines a Puiseux hypersurface. 
Note that $\Delta_y(f)=4096 {x_1}^2 {x_2}^2 ({x_1}^2 - 2x_1x_2 + {x_2}^2)$, so $f$ is not quasi-ordinary and $\pm x_1^{\frac{1}{2}}\pm x_2^{\frac{1}{2}}$ belong to $\mathbb{C}_{\preceq}((X^*))$ for any order $\preceq$ compatible with the first orthant.

\end{example}

\begin{example}\label{ejemplo3}
    The roots of $f(z)= z^2-(x+y^2) \in \mathbb{C}[[x,y]][z]$, in the field $\mathbb{C}_{\preceq_\omega}((X^*))$ with $\omega :=(4, \sqrt{2})$ are 

$$\xi_1=-y-\frac{1}{2}xy^{-1}+\frac{1}{8}x^2y^{-3}-\frac{1}{16}x^3y^{-5}+\frac{5}{128}x^4y^{-7}-\frac{7}{256}x^5y^{-9}+\cdots$$
   
    and 
  
$$\xi_2=y+\frac{1}{2}xy^{-1}-\frac{1}{8}x^2y^{-3}+\frac{1}{16}x^3y^{-5}-\frac{5}{128}x^4y^{-7}+\frac{7}{256}x^5y^{-9}+\cdots.$$
Taking $\omega :=(1, \sqrt{2})$ the roots of $f$  in $\mathbb{C}_{\preceq_{\omega}}((X^*))$ are:

$$\Tilde{\xi}_1 =-x^{\frac{1}{2}}-\frac{1}{2}x^{-\frac{1}{2}}y^2+\frac{1}{8}x^{-\frac{3}{2}}y^4-\frac{1}{16}x^{-\frac{5}{2}}y^6+\frac{5}{128}x^{-\frac{7}{2}}y^8-\frac{7}{256}x^{-\frac{9}{2}}y^{10}+\cdots$$ 
and
$$\Tilde{\xi}_2 =x^{\frac{1}{2}}+\frac{1}{2}x^{-\frac{1}{2}}y^2-\frac{1}{8}x^{-\frac{3}{2}}y^4+\frac{1}{16}x^{-\frac{5}{2}}y^6-\frac{5}{128}x^{-\frac{7}{2}}y^8+\frac{7}{256}x^{-\frac{9}{2}}y^{10}+\cdots.$$

Let $\eta := \langle (0,1), (1,-2)\rangle$ and $\Tilde{\eta} = \langle (1,0), (-1,2)\rangle$. We have $\xi_1, \xi_2\in \mathbb{C}_{\eta}((x,y))$ and $\Tilde{\xi}_1, \Tilde{\xi}_2 \in \mathbb{C}_{\Tilde{\eta}}((x^{\frac{1}{2}},y^\frac{1}{2}))$. Previously, we established that every order $\preceq$ under consideration is compatible with the first orthant. Therefore, $(0,0)\preceq(1,0)$ and $(0,0)\preceq(0,1)$. Consequently, $\preceq$ is compatible with either $\eta$ or $\widetilde{\eta}$. Indeed, if $(1,-2)\prec(0,0)$, then, by the additivity of $\preceq$, adding $(-1,2)$ to both sides yields $(0,0)\prec(-1,2)$. Similarly, if $(-1,2)\prec(0,0)$, adding $(1,-2)$ to both sides gives $(0,0)\prec(1,-2)$. Therefore, $(0,0)\preceq(1,-2)$ or $(0,0)\preceq(-1,2)$, and hence $\preceq$ is compatible with either $\eta$ or $\widetilde{\eta}$.

When $\preceq$ is compatible with $\eta$, its roots in $\mathbb{C}_{\preceq}((X^*))$ are $\xi_1$ and $\xi_2$ and, when $\preceq$ is compatible with $\Tilde{\eta}$, its roots in $\mathbb{C}_{\preceq}((X^*))$ are $\Tilde{\xi}_1$ and  $\Tilde{\xi}_2$.
\end{example}

\section{Dominating exponent}

Given a series $\xi\in K ((x))$ and $ord_x$ as in (\ref{orden de serie}) we have that
\begin{equation}
    \xi= x^{ord_x(\xi)}\Bar{\xi}
\end{equation}
where $\Bar{\xi}= x^{-ord_x(\xi)}\xi$ is a unit of $K[[x]]$. The term $ax^{ord_x(\xi)}$ in the series $\xi$ is called the \textbf{dominating term}. The exponent of the dominating term is called the \textbf{dominating exponent}. 

The semigroup (\ref{Semigrupo Curva Plana}) of a plane curve  is the semigroup of the exponents of the dominating terms of elements of $K[[x]][y]$ evaluated on a root of its defining polynomial.

We quote P. Popescu-Pampu \cite{MR1911654}:

"A difficulty for extending the plane branch definition of the semigroup is that in dimension $> 1$, fractional
series may have no dominating term. \emph{One way to force the existence of a dominating term is to restrict to those functions which do have one.}"

Instead of restricting the ring of functions to those that have a dominating exponent, our approach is to enlarge the ring of functions so that every element can be written as in (\ref{monomio por unidad}) in a unique way.

\begin{lemma}\label{5123812}
Given an element   $\xi \in K_{\preceq}((X^*))$, $\xi \neq 0$, there exists a unique $\gamma\in\mathbb{Q}^n$ such that
\begin{equation}\label{monomio por unidad}
\xi=X^\gamma u,
\end{equation}
with $u \in K_\preceq[[X^*]]$ invertible.
\end{lemma}
\begin{proof}
Let $\xi=\displaystyle\sum_\alpha a_\alpha X^\alpha$ and set $\gamma:= \min_\preceq \{\alpha \  ;\  a_\alpha\neq 0\}$. 

\begin{equation}\label{unidad por monomio}
    \xi=\sum_\alpha a_\alpha X^\alpha=X^\gamma\left(\displaystyle\sum_\alpha a_\alpha X^{\alpha-\gamma}\right).
\end{equation}

Now $\phi:= \displaystyle\sum_\alpha a_\alpha X^{\alpha-\gamma}=\displaystyle\sum_\alpha b_\alpha X^\alpha$ where $b_\alpha = a_{\alpha +\gamma}$. Since $b_{(0,\ldots ,0)} = a_\gamma\neq 0$, we have that $\phi$ is a unit.

\end{proof}

To make explicit the ring we are working on, we will say that $\gamma$ as in Lemma \ref{5123812} is the $\preceq$-\textbf{dominating exponent} of $\xi$. When the order $\preceq$ is an extension of $\preceq_\omega$, the $n$-tuple $\gamma$ will be called the $\omega$-\textbf{dominating exponent}.

\begin{example}
Set $\xi_1=x_1^{\frac{1}{2}}x_2^{\frac{3}{2}}+x_1^{\frac{3}{2}}x_2^{\frac{5}{2}}-3x_1^{\frac{3}{2}}x_2^{\frac{7}{2}}+x_1^2x_2^4 \in \mathbb{C}[[x_1^{\frac{1}{2}},x_2^{\frac{1}{2}}]]$. The $\preceq$-dominating exponent of $\xi$ is $(\frac{1}{2},\frac{3}{2})$ for any order $\preceq$ such that the first orthant is compatible with $\preceq$.
\end{example}

\begin{example}
Set $\xi :=  x_1^{\frac{1}{2}} + x_2^{\frac{1}{2}} \in \mathbb{C}[[x_1^{\frac{1}{2}},x_2^{\frac{1}{2}}]]$. 

The $(\sqrt{2},1)$-dominating exponent of $\xi$ is $(0,\frac{1}{2})$ whilst the  $(1,\sqrt{2})$-dominating exponent of $\xi$ is $(\frac{1}{2}, 0)$.

\end{example}

\begin{example}
    The $(1, \sqrt{2})$-dominating exponent of 
    $$\Tilde{\xi}_1 = -x^\frac{1}{2}+\frac{1}{2}x^{-\frac{1}{2}}y^2+\frac{1}{8}x^{-\frac{3}{2}}y^4+\frac{1}{16}x^{-\frac{5}{2}}y^{6}+\frac{5}{128}x^{-\frac{7}{2}}y^8+\frac{7}{256}x^{-\frac{9}{2}}y^{10}+\cdots \in \mathbb{C}_{\preceq_{(1,\sqrt{2})}}((x^{\frac{1}{2}},y^{\frac{1}{2}}))$$
    is $(\frac{1}{2},0)$.
   
    The  $(4, \sqrt{2})$-dominating exponent of 
$$\xi_1 =-y-\frac{1}{2}xy^{-1}+\frac{1}{8}x^2y^{-3}-\frac{1}{16}x^3y^{-5}+\frac{5}{128}x^4y^{-7}-\frac{7}{256}x^5y^{-9}+\cdots \in \mathbb{C}_{\preceq_{(4,\sqrt{2})}}((x,y))$$ 
is $(0,1)$. 
\end{example}

\begin{corollary}\label{Es el anillo de valoracion}
    An element $\xi\in K_\preceq((X^*))$ belongs to the ring $K_\preceq[[X^*]]$ if and only if its $\preceq$-dominant exponent belongs to ${\left(\mathbb{R}^n\right)}_{\succeq \underline{0}}$.
\end{corollary}
\begin{proof}
 Let $\gamma$ be the $\preceq$-dominant exponent of $\xi$. By Lemma \ref{5123812} there exists a unit $u \in K_\preceq[[X^*]]$ such that
$\xi=X^\gamma u$. Since $u$ is a unit,  $\underline{0}\in Supp(u)$ and then $\gamma\in Supp(X^\gamma u)\subset {\left(\mathbb{R}^n\right)}_{\succeq \underline{0}}$. If $\gamma\in {\left(\mathbb{R}^n\right)}_{\succeq \underline{0}}$, the monomial $X^\gamma$ is in $K_\preceq[[X^*]]$ and, therefore, the product $X^\gamma u$ belongs to $K_\preceq[[X^*]]$. 
\end{proof}

\section{Order and branches}

We quote S. S. Abhyankar \cite{AbhyankarMoh:1973}.

``If $z(t)$ is any element of $K((t))$ such that $f (t^n, z(t)) = 0$ then $z(t) = y(wt)$ for some
$w \in
\mu_{n}(k)$. 
In particular, we have $Supp_t z(t) = Supp_t y(t)$. Thus the set $Supp_t y(t)$ depends only on $f$ and not on a root $y(t)$ of $f (t^n, Y) = 0$. Therefore we can make ...''

The above property does not hold anymore when we consider several variables: The polynomial $f(y):=y^5+x_1^2x_2^2y^2+x_2^5$ is irreducible as an element of $K[[x_1,x_2]][y]$, the roots of $f$ in $K_{\preceq_{(1, \sqrt{5})}}((X^*))$    are
\begin{equation*}
    \xi_1=-x_1^{\frac{2}{3}}x_2^{\frac{2}{3}}-\frac{1}{3}x_1^{-\frac{8}{3}}x_2^{\frac{7}{3}}+\frac{1}{3}x_1^{-6}x_2^4-\frac{44}{81}x_1^{-\frac{28}{3}}x_2^{\frac{17}{3}}+\cdots,
    \end{equation*}
\begin{equation*}
        \xi_2=\frac{1-i\sqrt{3}}{2}x_1^{\frac{2}{3}}x_2^{\frac{2}{3}}+\frac{2i}{3i+3\sqrt{3}}x_1^{-\frac{8}{3}}x_2^{\frac{7}{3}}+\frac{1}{3}x_1^{-6}x_2^4-\frac{-44i+44\sqrt{3}}{81i+81\sqrt{3}}x_1^{-\frac{28}{3}}x_2^{\frac{17}{3}}+\cdots,
        \end{equation*}

         \begin{equation*}
        \xi_3=\frac{1+i\sqrt{3}}{2}x_1^{\frac{2}{3}}x_2^{\frac{2}{3}}-\frac{2i}{-3i+3\sqrt{3}}x_1^{-\frac{8}{3}}x_2^{\frac{7}{3}}+\frac{1}{3}x_1^{-6}x_2^4-\frac{44i+44\sqrt{3}}{-81i+81\sqrt{3}}x_1^{-\frac{28}{3}}x_2^{\frac{17}{3}}+\cdots,
        \end{equation*}
\begin{equation*}
         \xi_4=-ix_1^{-1}x_2^{\frac{3}{2}}-\frac{1}{2}x_1^{-6}x_2^4+\frac{9i}{8}x_1^{11}x_2^{\frac{13}{2}}+\frac{7}{2}x_1^{-16}x_2^{9}+\cdots,
\end{equation*}
\begin{equation*}
\xi_5=ix_1^{-1}x_2^{\frac{3}{2}}-\frac{1}{2}x_1^{-6}x_2^4-\frac{9i}{8}x_1^{11}x_2^{\frac{13}{2}}+\frac{7}{2}x_1^{-16}x_2^{9}+\cdots,
\end{equation*} 
and they do not have the same support.

 Another example of this phenomenon may be found in the introduction of G. Rond's paper \cite{Rond:2017}.

\begin{lemma}\label{factorizacion de polinomio monico}
    A monic polynomial $f \in K_\preceq [[X]][y]$ splits in $K_\preceq[[X^*]]$.
\end{lemma}
\begin{proof}
Since $K_\preceq((X^*))$ is algebraically closed, $f$ splits in $K_\preceq((X^*))$.  Let $\xi$ be a root of $f$ in $K_\preceq((X^*))$ and let $\gamma$ be the $\preceq$-dominating exponent of $\xi$.

Let \[
f(y)=y^d + a_{d-1}y^{d-1} + \cdots + a_1 y + a_0 \in K_\preceq [[X]][y],
\]
and, for $i\in \{ 0,\ldots ,d-1\}$, let $\eta_i$ be the $\preceq$-dominating exponent of $a_i$.

We have
\begin{equation}\label{razonamiento ordenes}
   f(\xi)=X^{d\gamma}u_d + X^{(d-1)\gamma+\eta_{d-1}}u_{d-1} +\cdots +  X^{\gamma+\eta_1}u_1 +X^{\eta_0}u_0 =0,
\end{equation}
where the $u_i$'s are units in $K_\preceq[[X^*]]$ and the $\eta_i$'s are $\preceq$-nonnegative (by Corollary \ref{Es el anillo de valoracion}).

Suppose that $\gamma\prec\underline{0}$, then for $i\in \{ 0,\ldots ,d-1\}$, $d\gamma\prec i\gamma +\eta_i$ and equation (\ref{razonamiento ordenes}) cannot hold. Then $\gamma\in {\left(\mathbb{R}^n\right)}_{\succeq \underline{0}}$ and, by Corollary \ref{Es el anillo de valoracion}, $\xi$ is in $K_\preceq[[X^*]]$.
\end{proof}

\begin{lemma}\label{irreducibilidad}
    An irreducible monic polynomial in $K_\preceq [[X]][y]$ is also irreducible as an element of $K_\preceq ((X))[y]$.
\end{lemma}
\begin{proof}
    If a monic polynomial $g\in K_\preceq ((X))[y]$ divides a monic polynomial $f\in K_\preceq [[X]][y]$ then the roots of $g$ are a subset of the roots of $f$. This implies, by Lemma \ref{factorizacion de polinomio monico}, that the roots of $g$ are in $K_\preceq[[X^*]]$. Therefore, $g\in K_\preceq[[X^*]][y]\cap K_\preceq ((X))[y]= K_\preceq [[X]][y]$.
\end{proof}

From now on we will work with monic polynomials in $K[[X]][y]$ and their roots in $K_\preceq[[X^*]]$.

    Let $f \in K[[X]][y]$ be a reduced monic polynomial and let $g_1, \ldots, g_l \in K_\preceq[[X]][y]$ be irreducible with  $f=g_1\cdots g_l$. Since the extension $K_{\preceq}((X^*))/K((X))$ is separable, the set of roots of $f$ in $K_{\preceq}((X^*))$ is the disjoint union of the sets of roots of the $g_i$'s.

\begin{definition}
Given $f \in K[[X]][y]$ and a total order  $\preceq$ on $\mathbb{R}^n$ compatible with the group structure. A $\preceq$-branch of $f$ is the set of roots in $K_{\preceq}[[X^*]]$  of an irreducible element  $g \in K_\preceq[[X]][y]$ that divides $f$ as element of   $K_\preceq[[X]][y]$.
\end{definition}

The $\preceq_{(1, \sqrt{5})}$-branches of $f(y):=y^5+x_1^2x_2^2y^2+x_2^5$ are the sets $\{ \xi_1,\xi_2,\xi_3\}$ and $\{ \xi_4,\xi_5\}$. (This will become clear after Proposition \ref{el semigrupo para misma rama}.)

The field $K_{\preceq}((X^{\frac{1}{k}}))=K_{\preceq}((X))[X^\frac{1}{k}]$ is the splitting field of the separable polynomial $(y^k-x_1)(y^k-x_2)\cdots (y^k-x_n)$. Therefore it is a finite Galois extension of $K_{\preceq}((X))$. The elements of the Galois group of this extension are given by
\begin{equation}\label{985000}
\tau_{\eta,\mu}: \varphi(x_1^\frac{1}{k}, \ldots, x_n^\frac{1}{k}) \mapsto \varphi(\eta^{\mu_1}x_1^\frac{1}{k}, \ldots, \eta^{\mu_n}x_n^\frac{1}{k})
\end{equation} 
 where $\eta$ is a $k$-th primitive root of unity and $\mu =(\mu_1, \ldots, \mu_n) \in \{0, \ldots, k-1\}^n$.

\begin{proposition}\label{el semigrupo para misma rama}
    Given $f \in K[[X]][y]$ and a total order  $\preceq$ on $\mathbb{R}^n$, compatible with the group structure. Let $\xi$ be a root of $f$ in $K_{\preceq}((X^*))$, let $k$ be such that $f$ splits in $K_\preceq((X^{\frac{1}{k}}))[y]$ and let $\eta$ be a primitive $k$-th root of unity. The $\preceq$-branch of $f$ containing  $\xi$ is the set
    \[
    B(\xi)= \{\tau_{\eta,\mu} (\xi ) ;  \mu =(\mu_1, \ldots, \mu_n) \in \{0, \ldots, k-1\}^n\}
    \]
    where $\tau_{\eta,\mu}$ is as in (\ref{985000}).
\end{proposition}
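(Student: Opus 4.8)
The plan is to recognize $B(\xi)$ as the full set of conjugates of $\xi$ under the Galois group $G:=\mathrm{Gal}(K_\preceq((X))_k/K_\preceq((X)))$, which by the discussion preceding the statement is exactly $\{\tau_{\eta,\mu};\ \mu\in\{0,\ldots,k-1\}^n\}$. First I would record the ambient facts. Since $f$ factorizes in $K_\preceq[[X]]_k[y]$, every root of $f$ lies in $K_\preceq[[X]]_k\subset K_\preceq((X))_k$; in particular $\xi\in K_\preceq((X))_k$. Moreover each $\tau_{\eta,\mu}$ fixes $K_\preceq((X))$ pointwise, hence fixes the coefficients of $f$, which lie in $K[[X]]\subset K_\preceq((X))$. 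Consequently $\tau_{\eta,\mu}(\xi)$ is again a root of $f$ for every $\mu$, so the right-hand side of the asserted equality is contained in the set of roots of $f$ in $\mathcal{S}_\preceq$.

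Next I would identify the branch containing $\xi$ with a minimal polynomial. Let $g\in K_\preceq[[X]][y]$ be the monic irreducible factor of $f$ with $g(\xi)=0$; by definition the $\preceq$-branch of $f$ containing $\xi$ is the set of roots of $g$ in $\mathcal{S}_\preceq$. The key claim is that $g$ coincides with the minimal polynomial $m_\xi$ of $\xi$ over the field $K_\preceq((X))$. For this I would invoke Gauss's lemma: because $K_\preceq[[X]]$ is integrally closed with fraction field $K_\preceq((X))$, a monic polynomial irreducible in $K_\preceq[[X]][y]$ remains irreducible in $K_\preceq((X))[y]$. Thus $g$ is the monic irreducible polynomial over $K_\preceq((X))$ vanishing at $\xi$, i.e. $g=m_\xi$.

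I would then finish with standard Galois theory. The extension $K_\preceq((X))_k/K_\preceq((X))$ is finite Galois with group $G$, and $\xi$ lies in it; therefore $m_\xi$ splits completely in $K_\preceq((X))_k$ and $G$ acts transitively on its roots, so that $m_\xi(y)=\prod_{\zeta\in G\cdot\xi}(y-\zeta)$, where $G\cdot\xi$ denotes the orbit. Combining $g=m_\xi$ with the fact that all roots of $g$ already lie in $K_\preceq((X))_k\subset\mathcal{S}_\preceq$ (so $\mathcal{S}_\preceq$ contributes no further roots), the root set of $g$ in $\mathcal{S}_\preceq$ is precisely the orbit $\{\tau_{\eta,\mu}(\xi);\ \mu\in\{0,\ldots,k-1\}^n\}$, which is the asserted description of $B(\xi)$.

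The main obstacle I anticipate is the passage between irreducibility over the ring $K_\preceq[[X]]$, used in the definition of a $\preceq$-branch, and irreducibility over the field $K_\preceq((X))$, used in the Galois argument; this rests on $K_\preceq[[X]]$ being integrally closed, which I would justify by recalling that these rings are (completions of) normal semigroup rings attached to rational cones. Should integral closedness be delicate to quote directly, I would instead argue intrinsically: partition the roots of $f$ in $K_\preceq((X))_k$ into $G$-orbits $O$, set $g_O(y):=\prod_{\zeta\in O}(y-\zeta)$, observe that each $g_O$ is $G$-invariant and hence has coefficients in the fixed field $K_\preceq((X))$, note that these coefficients are symmetric functions of roots lying in $K_\preceq[[X]]_k$ and therefore actually lie in $K_\preceq[[X]]$, and check that each $g_O$ is irreducible there with $f=\prod_O g_O$; uniqueness of this factorization then matches $g_O$ with $g$ and again yields $B(\xi)=G\cdot\xi$.
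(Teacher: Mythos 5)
Your argument is correct and is essentially the paper's proof: both show the orbit lies in the branch because each $\tau_{\eta,\mu}$ is a ring morphism fixing the coefficients of $g$, then use Gauss's lemma to pass from irreducibility in $K_\preceq[[X]][y]$ to irreducibility in $K_\preceq((X))[y]$, and conclude by transitivity of a Galois action on the roots of $g$ inside $K_\preceq((X))_k$. The only differences are cosmetic: the paper routes transitivity through the splitting field $L$ of $g$ and then extends automorphisms of $L$ to $K_\preceq((X))_k$, whereas you apply transitivity of $\mathrm{Gal}(K_\preceq((X))_k/K_\preceq((X)))$ on the roots of the minimal polynomial directly and make explicit the integral closedness of $K_\preceq[[X]]$ that Gauss's lemma needs (your fallback orbit-factorization sketch is a genuine alternative, but it is not required).
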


\begin{proof}
Let $g \in K_{\preceq}[[X]][y]$ be an irreducible element that divides $f$ and such that $g(\xi)=0$. The elements of the $\preceq$-branch of $f$ containing $\xi$ are the roots of $g$ in $K_{\preceq}((X^*))$.

The morphisms $\tau_{\eta,\mu} : K_{\preceq}[[X^\frac{1}{k}]] \longrightarrow K_{\preceq}[[X^{\frac{1}{k}}]]$ respect the ring structure; hence
$$
g(\tau_{\eta,\mu}(\xi))=\tau_{\eta,\mu}(g(\xi))=0.
$$
Therefore $\tau_{\eta,\mu}(\xi)$ is in the $\preceq$-branch of $f$ containing $\xi$.

Let $L$ be the splitting field of $g$ and let $H$ be the Galois group of the extension $L/K_{\preceq}((X))$.

Since $g$ is irreducible in $K_{\preceq}[[X]][y]$, by Lemma \ref{irreducibilidad}, it is irreducible in $K_{\preceq}((X))[y]$. Since $L$ is a splitting field for $g$, the group $H$ acts transitively on the roots of $g$.

Moreover, since $K_{\preceq}((X)) \subset L \subset K_{\preceq}((X^\frac{1}{k}))$, for each $\tau \in H$, there exists $\vartheta \in \operatorname{Gal}(K_{\preceq}((X^\frac{1}{k}))/K_{\preceq}((X)))$ such that $\tau=\vartheta|_L$. The result follows from the fact that the elements of $\operatorname{Gal}(K_{\preceq}((X^\frac{1}{k}))/K_{\preceq}((X)))$ are given by (\ref{985000}).

\end{proof}

\begin{example}
    The polynomial $f(z)=z^4-4x_2z^3+\left(6x_2^2-2x_2^3-2x_1x_2^4\right)z^2+\left(-4x_2^3+4x_2^4+4x_1x_2^5\right)z+x_2^4-2x_2^5+x_2^6-2x_1x_2^6-2x_1x_2^7+x_1^2x_2^8 \in \mathbb{C}[[x_1,x_2]][z]$, as in Example~\ref{essd}, has a unique $\preceq$-branch for any order $\preceq$ compatible with the first orthant.
\end{example}

\begin{example}

The polynomial $f(y):=y^4 - 2(x_1+x_2)y^2 + x_1^2 - 2x_1x_2 + x_2^2 \in \mathbb{C}[[x_1,x_2]][y]$, as in Example \ref{23231s}, has only one $\preceq$-branch,
for any order $\preceq$ compatible with the first orthant.

\end{example}

\begin{example}
    The polynomial $f(z)= z^2-(x+y^2) \in \mathbb{C}[[x,y]][z]$, as in Example \ref{ejemplo3}, has only one
    $\preceq_{(1, \sqrt{2})}$-branch whilst it has two $\preceq_{(4, \sqrt{2})}$-branches.
\end{example}

\section{The family of semigroups of values of a hypersurface singularity}

A valuation on a field $K$ is a mapping $\nu:K^* \rightarrow G$, where $(G,\preceq)$ is a totally ordered abelian group and, for $a, b\in K^*$, $\nu(ab)=\nu(a)+\nu(b)$ and  $\nu(a+b)\succeq \min_\preceq\{\nu(a), \nu(b)\}$. The image of a valuation is a subgroup of $G$. The mapping $ord_x$ in (\ref{orden de serie}) is an example of a valuation.

Note that
$$K_{\preceq}((X^*))=\left\{\varphi \ ; \ \exists \  \sigma \subset(\mathbb{R}^n)_{\succeq \underline{0}}, \  \gamma \in \mathbb{Z}^n \,\text{ and }\, k\in \mathbb{N} \,\text{ with }\, Supp(\varphi)\subset (\gamma +\sigma) \cap \frac{1}{k}\mathbb{Z}^n\right\},$$ therefore, $K_{\preceq}((X^*))$ has a natural valuation with value group $(\mathbb{Q}^n, \preceq )$ given by:
$$\nu_\preceq (\varphi ):= \min_\preceq Supp(\varphi).$$
The above definition extends the definition of order in (\ref{orden de serie}). Also, $\nu_\preceq (\varphi )$ is the $\preceq$-dominant exponent of $\varphi$.

Note that, since $Supp(\varphi )=Supp( \tau_{\eta,\mu}(\varphi))$, for $\varphi\in K_{\preceq}((X^{\frac{1}{k}}))$ and  $\tau_{\eta,\mu}$ as in (\ref{985000}), we have
\begin{equation}\label{NoCambiaConGrupoGal}
    \nu_\preceq (\varphi )=\nu_\preceq (\tau_{\eta,\mu}(\varphi ) ).
\end{equation}
That is, the elements of the Galois group of the extension $K_{\preceq}((X))\subset K_{\preceq}((X^{\frac{1}{k}}))$ are automorphisms of the valued field $(K_{\preceq}((X^{\frac{1}{k}})), \nu_\preceq)$.

An element $\xi\in K_{\preceq}((X^*))$ induces a mapping that extends $\nu_\preceq $ to $K_{\preceq}((X^*)) [y]\setminus (y-\xi)$ given by
$$\nu_\preceq ^\xi (P(y)):= \nu_\preceq (P (\xi))$$
where $(y-\xi)$ denotes the ideal of $K_{\preceq}((X^*)) [y]$ generated by $y-\xi$.

Given a subring $\mathcal{A}\subset K_\preceq[[X]]$ the subset of $\mathbb{Q}^n$ 
$$\Gamma^\preceq_{\xi ,\mathcal{A}}:= \{ \nu_\preceq (h(\xi )) \ ; \ h\in \mathcal{A}[y]\setminus  (y-\xi)\}$$
is a semigroup.

The semigroup considered by A. Sathaye is obtained by taking the reverse lexicographical order as $\preceq$. When $f$ is a quasi-ordinary polynomial, and $\mathcal{A}=K[[x_1,\ldots , x_n]]$ the semigroup $\Gamma^\preceq_{\xi ,\mathcal{A}}$ does not depend on the order $\preceq$ and is the semigroup studied by P. D. González. For a fixed cone $\sigma$, the semigroups considered by A. Ali and A. Assi  for a $\sigma $-free singularity are obtained by taking $\mathcal{A}= K_\sigma [[X]]$ and $\preceq$ compatible with $\sigma$. 

Although it is not explicitly stated in their paper, the construction of A. Ali and A. Assi depends both on the chosen cone $\sigma$ and on the chosen order.

If $f$ is $\sigma$-free and the discriminant of $f$ with respect to $y$ equals a monomial times a unit in $K_\sigma [[X]]$ then $f$ is an irreducible polynomial in $K_\sigma [[X]][y]$ and defines a toric quasi-ordinary branch (see \cite{Gonzalez:2000}). In this case, $\Gamma^\preceq_{\xi ,\mathcal{A}}$ does not depend on the order $\preceq$ as long as $\sigma\subset {(\mathbb{R}^n)}_{\succeq \underline{0}}$ and is the semigroup of a toric quasi-ordinary branch studied in \cite{Gonzalez:2003b}.

\begin{theorem}
    Given an order $\preceq$ and a subring $\mathcal{A}\subset  K_{\preceq}((X))$.  If $\xi$ and $\xi'$ are roots of $f \in K[[X]][y]$ in  $K_{\preceq}((X^*))$ that belong to the same branch, then $\Gamma^\preceq_{\xi ,\mathcal{A}}=\Gamma^\preceq_{\xi' ,\mathcal{A}}$.
\end{theorem}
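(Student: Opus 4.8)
The plan is to reduce the statement to the orbit description of a branch furnished by Proposition \ref{el semigrupo para misma rama}, and then to exploit the fact that the coefficient ring $\mathcal{A}$ sits inside $K_\preceq((X))$, which is precisely the fixed field of the Galois group appearing there. Concretely, I would first invoke Proposition \ref{el semigrupo para misma rama}: since $\xi$ and $\xi'$ lie in the same $\preceq$-branch of $f$, there is a $k$ for which $f$ factorizes in $K_\preceq[[X]]_k[y]$, a primitive $k$-th root of unity $\eta$, and a tuple $\mu \in \{0,\ldots,k-1\}^n$ such that $\xi' = \tau_{\eta,\mu}(\xi)$, where $\tau_{\eta,\mu}$ is the automorphism from (\ref{985000}). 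In particular $\xi, \xi' \in K_\preceq((X))_k$, so $\tau_{\eta,\mu}$ acts on both.

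The crux of the argument is the observation that $\tau_{\eta,\mu} \in Gal(K_\preceq((X))_k/K_\preceq((X)))$, so it fixes $K_\preceq((X))$ pointwise and hence fixes every element of the coefficient ring $\mathcal{A} \subset K_\preceq((X))$. Writing an arbitrary $h = \sum_i c_i y^i \in \mathcal{A}[y]$ and using that $\tau_{\eta,\mu}$ is a ring homomorphism fixing each $c_i$ while carrying $\xi$ to $\xi'$, I would obtain $\tau_{\eta,\mu}(h(\xi)) = \sum_i c_i (\xi')^i = h(\xi')$, for the very same polynomial $h$. This is exactly the step where the hypothesis $\mathcal{A} \subset K_\preceq((X))$ is indispensable: it lets the automorphism pass through the coefficients and convert a value taken at $\xi$ into the value of the identical $h$ taken at $\xi'$.

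From here I would conclude in two complementary steps. Since $\tau_{\eta,\mu}$ is injective, $h(\xi)=0$ if and only if $h(\xi')=0$, so the conditions $h \notin (y-\xi)$ and $h \notin (y-\xi')$ isolate the same polynomials, and the two semigroups are indexed by a common set of admissible $h$. Moreover, because $h(\xi) \in K_\preceq((X))_k$, the Galois-invariance of the valuation recorded in (\ref{NoCambiaConGrupoGal}) yields $\nu_\preceq(h(\xi')) = \nu_\preceq(\tau_{\eta,\mu}(h(\xi))) = \nu_\preceq(h(\xi))$. Thus each admissible $h$ contributes the same element to both semigroups, which forces $\Gamma^\preceq_{\xi,\mathcal{A}} = \Gamma^\preceq_{\xi',\mathcal{A}}$.

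I do not anticipate a genuine obstacle here: once Proposition \ref{el semigrupo para misma rama} and the invariance (\ref{NoCambiaConGrupoGal}) are available, the proof is a direct transport along a single Galois orbit rather than a construction matching \emph{different} polynomials for the two roots. The only points that need care are verifying that $h(\xi)$ indeed lands in $K_\preceq((X))_k$ so that (\ref{NoCambiaConGrupoGal}) is applicable, and making sure the hypothesis $\mathcal{A} \subset K_\preceq((X))$ is used precisely where it is needed, namely to ensure $\tau_{\eta,\mu}$ fixes the coefficients of $h$.
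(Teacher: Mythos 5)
Your proposal is correct and follows essentially the same route as the paper's proof: transport along the Galois automorphism $\tau_{\eta,\mu}$ supplied by Proposition \ref{el semigrupo para misma rama}, the identity $h(\xi')=\tau_{\eta,\mu}(h(\xi))$ for $h$ with coefficients in $\mathcal{A}\subset K_\preceq((X))$, and the support-invariance (\ref{NoCambiaConGrupoGal}) of $\nu_\preceq$. If anything, you are slightly more careful than the paper, since you make explicit both why the hypothesis $\mathcal{A}\subset K_\preceq((X))$ is needed (so $\tau_{\eta,\mu}$ fixes the coefficients) and why the excluded sets $\mathcal{A}[y]\cap(y-\xi)$ and $\mathcal{A}[y]\cap(y-\xi')$ correspond under $\tau_{\eta,\mu}$, a point the paper leaves implicit.
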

\begin{proof}
Given $f \in K[[X]][y]$ and a total order  $\preceq$ on $\mathbb{R}^n$, compatible with the group structure. Let $\xi$ and $\xi'$ be roots  of $f$ in $K_{\preceq}((X^*))$ in the same $\preceq$-branch, let $k$ be such that $f$ splits in $K_\preceq[[X^{\frac{1}{k}}]][y]$. By Proposition \ref{el semigrupo para misma rama}, there exists $\eta$, a primitive $k$-th root of unity, and $ \mu =(\mu_1, \ldots, \mu_n) \in \{0, \ldots, k-1\}^n$ such that
    \[
    \xi'= \tau_{\eta,\mu} (\xi )
    \]
    where $\tau_{\eta,\mu}$ is as in (\ref{985000}).

    Given $h\in \mathcal{A}[y]\subset K_{\preceq}((X)) [y]$, we have $h(\xi')=h(\tau_{\eta,\mu} (\xi ))= \tau_{\eta,\mu} (h(\xi))$.
    
    Now, for any $\varsigma\in K_\preceq[[X^{\frac{1}{k}}]]$, $Supp (\varsigma)= Supp (\tau_{\eta,\mu}(\varsigma ))$, and then $\nu_\preceq (\varsigma)= \nu_\preceq(\tau_{\eta,\mu}(\varsigma ))$. This implies that
    $$\nu_\preceq (h(\xi))= \nu_\preceq (h(\xi')).$$
\end{proof}

\begin{corollary}
    Given an order $\preceq$ and a subring $\mathcal{A}\subset  K_{\preceq}((X))$.  If $\xi$ and $\xi'$ are roots of $f \in K[[X]][y]$ and $f$ is irreducible as an element of  $K_{\preceq}((X))[y] $, then $\Gamma^\preceq_{\xi ,\mathcal{A}}=\Gamma^\preceq_{\xi' ,\mathcal{A}}$.
\end{corollary}

\section{Examples}

In this section, we compute several semigroups $\Gamma^\preceq_{\xi,\mathcal{A}}$ in order to illustrate how they vary with the choice of order $\preceq$ and of the ring $\mathcal{A} \subset K_\preceq((X))$. In particular, throughout this section, the semigroups are computed with respect to the ring $\mathcal{A}=K_\sigma[[X]]$.

The following proposition will be very helpful for computations.

\begin{proposition}\label{gradoh}
Given a monic polynomial $f \in K[[X]][y]$, a root $\xi \in K_\preceq((X^*))$ of $f$, and a ring $\mathcal{A} \subset K_\preceq((X))$, it follows that
\[
\Gamma^\preceq_{\xi,\mathcal{A}}
=
\left\{
\nu_\preceq(h(\xi)) \; ; \; h \in \mathcal{A}[y], \ \partial_y(h)<\partial_y(f)
\right\},
\]
where $\partial_y(P)$ denotes the degree of $P$ in $y$.
\end{proposition}
\begin{proof}

Given $h \in \mathcal{A}[y]$, there exist $c,r \in \mathcal{A}[y]$ with $\partial_y(r)<\partial_y(f)$ such that
\[
h=cf+r.
\]
Evaluating at $\xi$, we obtain
\[
h(\xi)=c(\xi)f(\xi)+r(\xi)=r(\xi),
\]
since $f(\xi)=0$.
\end{proof}

\begin{example}
 
Take $\xi_1$ to be the root of $f$ in Example \ref{essd}, $\xi_1:=x_1^{\frac{1}{2}}x_2^{\frac{3}{2}}+x_1^{\frac{3}{2}}x_2^{\frac{5}{2}}-3x_1^{\frac{3}{2}}x_2^{\frac{7}{2}}+x_1^2x_2^4
%,
%\xi_2=-x_1^{\frac{1}{2}}x_2^{\frac{3}{2}}-x_1^{\frac{3}{2}}x_2^{\frac{5}{2}}+3x_1^{\frac{3}{2}}x_2^{\frac{7}{2}}+x_1^2x_2^4
$$, 
$
that belongs to $\mathbb{C}[[x_1^{\frac{1}{2}}, x_2^{\frac{1}{2}}]]$.

For any order $\preceq$ we have
\[
\nu_\preceq(\xi_1)=\left(\frac{1}{2},\frac{3}{2}\right)
\]
and then,
\[
\mathbb{Z}^2\cap \sigma
+
\left(\frac{1}{2},\frac{3}{2}\right)\mathbb{Z}_{\geq 0} \subset \Gamma^\preceq_{\xi_1,\mathbb{C}_\sigma[[x_1,x_2]]}
\]
for any cone $\sigma$ compatible with $\preceq$.

On the other hand, by Proposition~\ref{gradoh}, for any $\nu\in\Gamma^\preceq_{\xi_1,\mathbb{C}_\sigma[[x_1,x_2]]}$ there exists a polynomial of degree less than 2,
%$h \in \mathbb{C}_\sigma[[x_1,x_2]][y]$
\[
h=r_0+r_1z,
\qquad \text{with } r_0,r_1 \in \mathbb{C}_\sigma[[x_1,x_2]]
\]
such that $\nu= \nu_\preceq (h(\xi_1))$.

We have
\[
\nu_\preceq(r_0)\in \mathbb{Z}^2\cap\sigma \qquad\text{and}\qquad \nu_\preceq(r_1\xi_1)\in \left(\frac{1}{2},\frac{3}{2}\right) + \mathbb{Z}^2\cap\sigma
\]
then
\[
\nu_\preceq(r_0)\neq \nu_\preceq(r_1\xi_1)
\]
which implies
\[
\nu=\nu_\preceq(r_0)\quad\text{or}\quad \nu=\nu_\preceq(r_1\xi_1).
\]

Therefore,
\[
\Gamma^\preceq_{\xi_1,\mathbb{C}_\sigma[[x_1,x_2]]}
=
\mathbb{Z}^2\cap \sigma
+
\left(\frac{1}{2},\frac{3}{2}\right)\mathbb{Z}_{\geq 0}.
\]

Note that, 
\[
\Gamma^\preceq_{\xi_1 ,\mathbb{C}[[X]]}\subset {\left({\mathbb{R}_{\geq 0}}\right)}^n
\]
and, for different orders $\preceq$ and $\preceq'$, and any cone $\sigma$, compatible with $\preceq$ and $\preceq'$, we have the equality
\[\Gamma^\preceq_{\xi_1 ,\mathbb{C}_\sigma[[X]]}=\Gamma^{\preceq'}_{\xi_1 ,\mathbb{C}_\sigma[[X]]}=\Gamma^\preceq_{\xi_1 ,\mathbb{C}[[X]]} + \left( \sigma \cap \mathbb{Z}^2\right).
\]

\end{example}

\begin{example} Let
$f$ be
as in Example~\ref{23231s}, and let $\xi$ be the root
$
\xi:= x_1^{\frac{1}{2}} + x_2^{\frac{1}{2}} \in \mathbb{C}[[x_1^{\frac{1}{2}},x_2^{\frac{1}{2}}]].
$

Let $\preceq$ be an order, and let $\sigma$ be a cone compatible with it.

\begin{enumerate}
    \item[Case (1):]  The order $\preceq$ is compatible with  $\eta_1 := \langle (1,0), (-1,1)\rangle$.

    Consider
    $h_0:=z$ and $h_1:=z^2-x_1.$

    We have,
    $\nu_\preceq\left(h_0(\xi)\right)=
    \nu_\preceq\left(x_1^{\frac{1}{2}} + x_2^{\frac{1}{2}}\right)
    =
    \min_\preceq\left\{
    \left(\frac{1}{2},0\right),
    \left(0,\frac{1}{2}\right)
    \right\},
    $
    since
    \[
    \left(0,\frac{1}{2}\right)-\left(\frac{1}{2},0\right)
    =
    \left(-\frac{1}{2},\frac{1}{2}\right)\in \eta_1 \subset \left(\mathbb{R}^2\right)_{\succeq \underline{0}},
    \]
    it follows that
    \begin{equation}
    \left(0,\frac{1}{2}\right)\succeq \left(\frac{1}{2},0\right)
    \quad \text{and} \quad
    \nu_\preceq\bigl(x_1^{\frac{1}{2}} + x_2^{\frac{1}{2}}\bigr)
    =
    \nu_\preceq\bigl(h_0(\xi)\bigr)
    =
    \left(\frac{1}{2},0\right).
    \label{may}
    \end{equation}

    Similarly,
    \[
    \nu_\preceq\bigl(h_1(\xi)\bigr)
    =
    \nu_\preceq\bigl(\xi^2-x_1\bigr)
    =
    \nu_\preceq\left(2x_1^{\frac{1}{2}}x_2^{\frac{1}{2}}+x_2\right)
    =
    \min_\preceq\left\{
    \left(\frac{1}{2},\frac{1}{2}\right),
    (0,1)
    \right\}.
    \]
    Since
    \[
    (0,1)-\left(\frac{1}{2},\frac{1}{2}\right)
    =
    \left(-\frac{1}{2},\frac{1}{2}\right)\in \eta_1 \subset \left(\mathbb{R}^2\right)_{\succeq \underline{0}},
    \]
    it follows that
    \begin{equation}
    (0,1)\succeq \left(\frac{1}{2},\frac{1}{2}\right)
    \quad \text{and} \quad
    \nu_\preceq\bigl(h_1(\xi)\bigr)
    =
    \left(\frac{1}{2},\frac{1}{2}\right).
    \label{may2}
    \end{equation}

Let $h \in \mathbb{C}_{\sigma}[[x_1,x_2]][y]$ be as in Proposition \ref{gradoh}, with $\partial_y(h)<\partial_y(f)$, and write
\[
h=r_{00}+r_{10}h_0+r_{01}h_1+r_{11}h_0h_1,
\]
where $r_{00}, r_{10}, r_{01}, r_{11} \in \mathbb{C}_{\sigma}[[x_1,x_2]]$.

    Evaluating $h$ at $\xi$ and using \eqref{may} and \eqref{may2}, we have
    \[
    \begin{aligned}
    \nu_\preceq(r_{00}) &\in \mathbb{Z}^2 \cap \sigma, \\
    \nu_\preceq\bigl(r_{10}h_0(\xi)\bigr) &\in \left(\frac{1}{2},0\right)+\mathbb{Z}^2 \cap \sigma, \\
    \nu_\preceq\bigl(r_{01}h_1(\xi)\bigr) &\in \left(\frac{1}{2},\frac{1}{2}\right)+\mathbb{Z}^2 \cap \sigma, \\
    \nu_\preceq\bigl(r_{11}h_0h_1(\xi)\bigr) &\in \left(1,\frac{1}{2}\right)+\mathbb{Z}^2 \cap \sigma.
    \end{aligned}
    \]

    We conclude that
    \[
    \nu_\preceq\bigl(r_{ij}h_0^ih_1^j\bigr)
    \neq
    \nu_\preceq\bigl(r_{i'j'}h_0^{i'}h_1^{j'}\bigr)
    \quad \text{whenever } (i,j)\neq(i',j').
    \]
    Hence, there exists $(i,j)\in \{0,1\}^2$ such that
    \[
    \nu_\preceq\bigl(h(\xi)\bigr)=\nu_\preceq\bigl(r_{ij}h_0^ih_1^j\bigr).
    \]

    Finally, we have
    \[
    %\nu_\preceq\bigl(h(\xi)\bigr)
    %\in
    \Gamma^\preceq_{\xi,\mathbb{C}_{\sigma}[[x_1, x_2]]}=\mathbb{Z}^2\cap \sigma
    +
    \left(\frac{1}{2},0\right)\mathbb{Z}_{\geq 0}
    +
    \left(\frac{1}{2},\frac{1}{2}\right)\mathbb{Z}_{\geq 0}.
    \]

    \item[Case (2):] The order $\preceq$ is not compatible with $\eta_1$.  
    
    Since $\preceq$ is not compatible with $\eta_1$, it follows that $(-1,1)\prec(0,0)$. By additivity of $\preceq$, adding $(1,-1)$ to both sides yields $(0,0)\prec(1,-1)$. Since, moreover, $(0,0)\preceq(0,1)$, we conclude that $\preceq$ is compatible with $\eta_2:=\langle(0,1),(1,-1)\rangle.$
 
Consider
$h_0=z$ and $h_1=z^2-x_2$.

Proceeding as in Case (1), we obtain
\[
\nu_\preceq\bigl(h(\xi)\bigr)
\in
\mathbb{Z}^2\cap \sigma
+
\left(0,\frac{1}{2}\right)\mathbb{Z}_{\geq 0}
+
\left(\frac{1}{2},\frac{1}{2}\right)\mathbb{Z}_{\geq 0}
=
\Gamma^\preceq_{\xi,\mathbb{C}_{\sigma}[[x_1, x_2]]}.
\]

\end{enumerate}

Note that, as in the preceding example, for any cone $\sigma$ compatible with $\preceq$, we have

\begin{equation}\label{Creo que pasa para las libres}
\Gamma^\preceq_{\xi_1 ,\mathbb{C}[[X]]}\subset {\left({\mathbb{R}_{\geq 0}}\right)}^n \qquad \text{and}\qquad\Gamma^\preceq_{\xi_1,\mathbb{C}_\sigma[[X]]}=\Gamma^\preceq_{\xi_1,\mathbb{C}[[X]]}+(\sigma\cap\mathbb{Z}^2).
\end{equation}

However, in this example, for two different orders $\preceq$ and $\preceq'$ compatible with $\sigma$, the semigroups $\Gamma^\preceq_{\xi_1,K_\sigma[[X]]}$ and $\Gamma^{\preceq'}_{\xi_1,K_\sigma[[X]]}$ may be different.

\end{example}

\begin{example} 
Let  $f(z)= z^2-(x+y^2) \in \mathbb{C}[[x,y]][z]$, and let $\eta := \langle (0,1), (1,-2)\rangle$ and $\Tilde{\eta} = \langle (1,0), (-1,2)\rangle$ (as in Example \ref{ejemplo3}).
\begin{enumerate}
    \item
    Take $\Tilde{\xi}_1$ to be the root of $f$ in $K_{\Tilde{\eta}}[[x^{\frac{1}{2}},y^{\frac{1}{2}}]]$ given by 
    \[
    \Tilde{\xi}_1 = \sum_{i\geq 0}
    \begin{pmatrix}
        \frac{1}{2}\\
        i
    \end{pmatrix}
    x^{\frac{1}{2}-i}y^{2i}.
    \]
Let $\preceq$ be an order compatible with $\Tilde{\eta}$, and let $\sigma$ be a cone compatible with $\preceq$.
   
    Consider $h=r_0(x,y)+r_1(x,y)z$ with $r_i\in K_{\sigma}[[x,y]]$.

    Then, 
    \[
    \nu_1:=\nu_\preceq(r_0(x,y)) \in \mathbb{Z}^2\cap \sigma
    \]
    and, since $\nu_\preceq(\Tilde{\xi}_1)=\left(\frac{1}{2},0\right)$,
    \[
    \nu_2=\nu_\preceq(r_1(x,y)\Tilde{\xi}_1)\in \left(\frac{1}{2},0\right)+\mathbb{Z}^2\cap \sigma.
    \]

    Then $\nu_1\neq \nu_2$, since the first coordinate of $\nu_1$ is an integer, whereas the first coordinate of $\nu_2$ is not. Therefore,
    \[
    \nu_\preceq(h(\Tilde{\xi}_1))=\nu_\preceq(r_0)
    \qquad \text{or} \qquad
    \nu_\preceq(h(\Tilde{\xi}_1))=\nu_\preceq(r_1\Tilde{\xi}_1).
    \]
    Hence,
    \[
    \nu_\preceq(h(\Tilde{\xi}_1))
    \in
    \mathbb{Z}^2\cap \sigma
    +
    \left(\frac{1}{2},0\right)\mathbb{Z}_{\geq 0}
    =
    \Gamma^\preceq_{\Tilde{\xi}_1,\mathbb{C}_{\sigma}[[x,y]]}.
    \]

Note that, as in the preceding examples, for any cone $\sigma$ compatible with $\preceq$, we have

\[
\Gamma^\preceq_{\Tilde{\xi}_1 ,\mathbb{C}[[X]]}\subset {\left({\mathbb{R}_{\geq 0}}\right)}^n \qquad \text{and}\qquad\Gamma^\preceq_{\Tilde{\xi}_1,\mathbb{C}_\sigma[[X]]}=\Gamma^\preceq_{\Tilde{\xi}_1,\mathbb{C}[[X]]}+(\sigma\cap\mathbb{Z}^2).
\]

\item 
Take $\xi_1$ to be the root of $f$ in $K_{\eta}[[x,y]]$ given by 
    \[
    \xi_1 = \sum_{r\geq 0}
    \begin{pmatrix}
        \frac{1}{2}\\
        r
    \end{pmatrix}
    x^ry^{1-2r},
    \]
let $\preceq$ be an order compatible with $\eta$, and let $\sigma$ be a cone compatible with $\preceq$. 
\begin{enumerate}
    \item 
    If $\eta\subset\sigma$ then
$$
\sigma \cap \mathbb{Z}^2=\Gamma^\preceq_{\xi_1,\mathbb{C}_\sigma[[x,y]]}.
$$
    \item 
    If $\eta\subsetneq\sigma $ set 
 \[
    \varphi := \sum_{r\geq 0, (r,1-2r)\in\sigma}
    \begin{pmatrix}
        \frac{1}{2}\\
        r
    \end{pmatrix}
    x^ry^{1-2r}
    \]
    and set $h:= z-\varphi$. 

We have
\[
\nu_\preceq (h(\xi_1))\notin\sigma\quad\text{and}\quad\nu_\preceq (h(\xi_1))\in\Gamma^\preceq_{\xi_1,\mathbb{C}_\sigma[[x,y]]}
\]
so, in this case,
\[
\Gamma^\preceq_{\xi_1 ,\mathbb{C}_\sigma [[x,y]]}\nsubseteq \sigma.
\]
\end{enumerate}

\end{enumerate}
\end{example}

\section*{Acknowledgments}
We extend our gratitude to Dr. Fabiola Manjarrez Gutiérrez for her valuable comments and suggestions, which have significantly enhanced this work and to Adrien Poteaux and Martin Weimann who pointed out some mistakes in a previous version of this paper. We would like to thank Patrick Popescu-Pampu for providing us with the references by Arf and Du Val.
We also thank both referees for their comments and suggestions that significantly improved the article.

This project was supported by DGAPA-UNAM through PAPIIT IN113323 and PAPIIT IN112626.

Giovanna Ilardi was partially supported by GNSAGA-INDAM and by the Universit\`a degli Studi di Napoli, ``Lefschetz Properties, arrangements and unexpected hypersurfaces: geometric, algebraic and computational point of view", funded by the University, project FRA 2024 linea A --
CUP: E63C24002510005.

\bibliographystyle{plain}
\bibliography{referencias}

\end{document}